\newcommand{\bC}{\mathbf{C}}
\newcommand{\bR}{\mathbf{R}}
\newcommand{\ord}{\mbox{\rm ord}}
\newcommand{\Nj}{{\cal N}_J}
\newcommand{\jac}{\mathop{\mathrm{jac}}}
\newcommand{\unit}{\hbox{$\,o\!\!\!\!$\lower0.06ex\hbox{---}\,}}
\newtheorem{Theorem}{Theorem}[section]
\newtheorem{Example}[Theorem]{Example}
\newtheorem{Lemma}[Theorem]{Lemma}
\newtheorem{Definition}[Theorem]{Definition}
\newcommand{\Teis}[2]{\left\{
   \setlength{\unitlength}{1ex}
   \begin{picture}(2,0)(0,0.4)
      \put(0,1.1){\line(1,0){2}}
      \put(0,0.9){\line(1,0){2}}
      \put(1,1.2){\makebox(0,0)[b]{$\scriptstyle #1$}}
      \put(1,0.8){\makebox(0,0)[t]{$\scriptstyle #2$}}
   \end{picture}\right\}}
\newcommand{\Teisssr}[4]{\left\{
   \setlength{\unitlength}{1ex}
   \begin{picture}(#3,3)(0,0.4)
      \put(0,1.15){\line(1,0){#3}}
      \put(0,0.85){\line(1,0){#3}}
      \put(#4,1.3){\makebox(0,0)[b]{$#1$}}
      \put(#4,0.7){\makebox(0,0)[t]{$#2$}}
   \end{picture}\right\}}
\newenvironment{proof}[1][Proof]{\textbf{#1.} }{\
\rule{0.5em}{0.5em}}
\author{Janusz Gwo\'zdziewicz\\
\texttt{matjg@tu.kielce.pl}
}
\title{Invariance of the jacobian Newton diagram}
\begin{document}
\maketitle

\begin{abstract}
We prove that the jacobian Newton diagram of the holomorphic mapping $(f,g):(\bC^2,0)\to(\bC^2,0)$
depends only on the equisingularity class of the pair of curves $f=0$ and $g=0$. 
\end{abstract}
%----------------------------------------------
%%%%%%%%%%%%%%%%%%%%%%%%%%%%%%%%%%%%%%%%%

\section{Introduction}\label{Intro}
Write $\bR_{+}=\{\,x\in\bR:x\geq0\,\}$. The Newton diagram $\Delta_h$ of a power series 
$h(x,y)=\sum_{i,j}c_{ij}x^iy^j$ is by definition the convex hull of the union 
$$ \bigcup_{ \{ (i,j) : c_{ij}\neq0 \} }  \!\! \{\,(i,j)+\bR_{+}^2\,\}. $$

\noindent
\parbox{3in}{\begin{Example}\label{Ex:ND}
The Newton diagram of $h(x,y)=y^5+2xy^3-x^3y^2+3x^4y$ is drawn in the figure.
Black dots are the points of the first quadrant $\bR_{+}^2$ corresponding to  %with
non-zero monomials of the series~$h$.
\end{Example}}
\hspace{5em}\includegraphics[scale=0.6, trim=10mm 10mm 60mm 60mm]{maciek.1}    %grafika

Let $\phi:(\bC^2,0)\to(\bC^2,0)$, $\phi^{-1}(0,0)=\{(0,0)\} $
 be a germ of a holomorphic  mapping given by $(x,y)=(f(u,v),g(u,v))$. 
Let $\jac\phi=\frac{\partial f}{\partial u}\frac{\partial g}{\partial v} - 
\frac{\partial f}{\partial v}\frac{\partial g}{\partial u}$ 
be the usual jacobian determinant. 
The direct image of the curve germ $\jac\phi=0$ by~$\phi$ is called the {\em
discriminant curve} of $\phi$ (see \cite{Casas}). 
If $D(x,y)=0$ is an analytic equation of the discriminant
curve then the Newton diagram of $D$ is called the
{\em jacobian Newton diagram} of $(f,g)$. %(see \cite{Teissier3}). 
We will write $\Nj(f,g)$ for the jacobian Newton diagram.

\begin{Definition}
Let $\xi$, $\xi'$, $\nu$, $\nu'$ be germs of analytic curves in $(\bC^2,0)$. 
We say that the pairs of curves $\xi$, $\nu$ and $\xi'$, $\nu'$  are equisingular 
if there exists a homeomorphism $\Psi:(\bC^2,0)\to(\bC^2,0)$
preserving the multiplicity of each branch such that $\Psi(\xi)=\xi'$ and
$\Psi(\nu)=\nu'$.
\end{Definition}
%%%%%%%%%%%%%%%%%%%%%%%%%%%%%%%%%%%%%%%

\section{Main result}\label{Intro}
\begin{Theorem}\label{Th:2}
Let $(f,g):(\bC^2,0)\to(\bC^2,0)$, $(f,g)^{-1}(0,0)=\{(0,0)\}$ 
be a germ of a holomorphic mapping.
Then the jacobian Newton diagram $\Nj(f,g)$ depends only 
on the equisingularity class of the pair of curves $f=0$ and $g=0$.
\end{Theorem}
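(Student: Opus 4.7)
My plan is to reduce the theorem, via a decomposition of the discriminant into images of the branches of $\jac\phi$, to the statement that the intersection numbers of each jacobian branch with $f$ and with $g$ are equisingular invariants of the pair $(f=0,g=0)$.

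First I would decompose the curve $\{\jac\phi=0\}$ into its irreducible branches $\gamma_1,\ldots,\gamma_r$, with multiplicities $e_1,\ldots,e_r$ in the divisor of $\jac\phi$. Parametrize each branch primitively by $t\mapsto\gamma_i(t)$ and put
$$
a_i = \ord_t f(\gamma_i(t)) = i_0(f,\gamma_i), \qquad b_i = \ord_t g(\gamma_i(t)) = i_0(g,\gamma_i).
$$
Let $d_i$ be the degree of $\phi|_{\gamma_i}:\gamma_i\to\phi(\gamma_i)$; the image branch $\phi(\gamma_i)$ is primitively parametrized with orders $(a_i/d_i,b_i/d_i)$, so its defining equation $D_i$ has a one-segment Newton diagram joining $(0,a_i/d_i)$ and $(b_i/d_i,0)$. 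The discriminant divisor is $\sum_i e_i d_i\,[\phi(\gamma_i)]$, so it has equation $\prod_i D_i^{e_i d_i}$ up to a unit. Newton diagrams are additive under products (Minkowski sum), so the $d_i$ factors cancel and
$$
\Nj(f,g) \;=\; \sum_{i=1}^{r}\bigl\{\text{segment from }(0,e_i a_i)\text{ to }(e_i b_i,0)\bigr\},
$$
the sum being a Minkowski sum. Thus $\Nj(f,g)$ is completely determined by the multiset of triples $\{(a_i,b_i;e_i)\}$.

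The second step is to show that this multiset depends only on the equisingularity class of $(f=0,g=0)$. Intersection numbers of analytic branches are classical topological invariants, so once the branches of $\jac\phi$ can be matched across equisingular pairs, the triples $(a_i,b_i;e_i)$ must agree. Such a matching is furnished by a Kuo--Lu/Merle type factorization of the jacobian: the irreducible factors of $\jac(f,g)$ group into packets indexed by the rupture vertices (``bars'') of the Eggers--Wall tree of the product $fg$, and each packet has prescribed contact behaviour with the branches of $f$ and of $g$ that can be read off the tree. Since the Eggers--Wall tree of $fg$ is a complete equisingular invariant of the pair (Zariski), the triples $(a_i,b_i;e_i)$ are equisingular invariants, and combining with the previous paragraph proves the theorem.

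The main obstacle is this factorization step. The one-function case ($g=x$, so $\jac\phi=\partial f/\partial v$ is the classical polar of $f$) is covered by the Kuo--Lu theorem; for a genuine pair $(f,g)$ one has to extract both intersection numbers $i_0(f,\gamma_i)$ and $i_0(g,\gamma_i)$ simultaneously as combinatorial data of the Eggers--Wall tree. That simultaneous bi-weighted description is where I expect the bulk of the technical work to lie, and it will presumably be carried out by tracking, branch by branch, how the Puiseux expansions of $f$ and $g$ interlace along each bar.
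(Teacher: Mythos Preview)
Your first step—writing $\Nj(f,g)$ as a Minkowski sum of elementary diagrams indexed by the irreducible factors of the jacobian—is correct and appears in the paper's survey (attributed to Teissier). The second step fails as written. The multiset $\{(a_i,b_i;e_i)\}$ attached to the \emph{individual} branches of $\jac(f,g)$ is \emph{not} an equisingular invariant of the pair: already in the classical polar case ($g$ a smooth transversal) the topological type of the polar curve, and in particular the number of its branches, is known to vary within an equisingularity class of $f$. So there is no bijection of jacobian branches across equisingular pairs to which you could apply ``intersection numbers are topological''.

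What does survive is the minimal Hironaka factorization into packets $J_k$, and your last two paragraphs drift toward this. But then the assertion that the pairs $(i_0(f,J_k),i_0(g,J_k))$ can be read off the Eggers tree of $fg$ \emph{is} the theorem rather than a step toward it. The paper points out explicitly that the Kuo--Parusi\'nski factorization does not provide these numbers at the ``collinear bars'', so citing it is insufficient; Michel's resolution-theoretic computation does supply them and hence implies the theorem, but that is an external result at least as hard as what is being proved, not a proof strategy you have carried out.

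The paper's own argument avoids decomposing $\jac(f,g)$ altogether. It recovers $\Nj(f,g)$ through its support values: for every coprime pair $(m,n)$ and generic $t$,
\[
l\bigl((m,n),\Nj(f,g)\bigr)=\mu_0(f^n-tg^m)-i_0(f,g)\bigl[(m-1)(n-1)-1\bigr]-1,
\]
a short computation combining the parametric formula for $i_0(x^n-ty^m,D)$ with Casas-Alvero's formula for the Milnor number of a pull-back by $\phi$. The right-hand side involves only $i_0(f,g)$ and the equisingularity class of the generic pencil curve $f^n-tg^m=0$; the paper proves separately (Theorem~\ref{Th:1}, via the minimal good resolution of $fg=0$ and a toric argument) that this generic class is determined by the equisingularity class of the pair $(f=0,g=0)$. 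Since a Newton diagram is determined by all its support values $l(\vec v,\cdot)$, the theorem follows.
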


The proof  is in the last section. 

We give a short survey of results related with Theorem~\ref{Th:2}. 
We need a few notions which will be used only in this section 
to explain connection between certain analytic factorizations of $\jac(f,g)$ 
and the jacobian Newton diagram~$\Nj(f,g)$.

The Minkowski sum of Newton diagrams $\Delta_1$ and $\Delta_2$ is by definition 
$\Delta_1+\Delta_2=\{ p+q: p\in\Delta_1, q\in\Delta_2\,\}$.
The set of Newton diagrams is a semi-group with respect to Minkowski sum and the generators 
of this semi-group are elementary Newton diagrams illustrated in Figure~1. 
\begin{figure}[htb!]
\centering%
\includegraphics{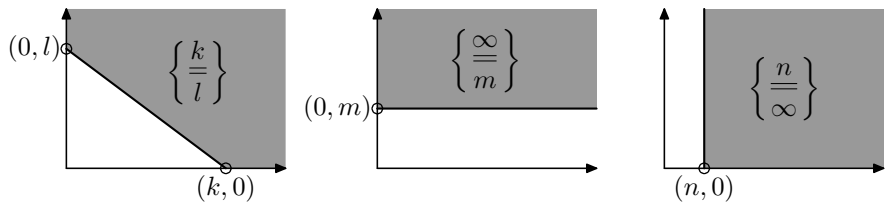}
\caption{Elementary Newton diagrams}
\label{fig:FigureExample}
\end{figure}

The inclination of the elementary Newton diagram~$\Teis{a}{b}$
is the quotient $\frac{a}{b}$ with conventions %that 
$\frac{\infty}{b}=\infty$ and $\frac{a}{\infty}=0$.  
For an arbitrary Newton diagram $\Delta$ represented as a sum of elementary Newton diagrams 
let us denote $I(\Delta)$  the set of inclinations of elementary Newton diagrams of the sum. 
It is easy to see that $I(\Delta)$ does not depend on the choice of representation. 
Coming back to Example~\ref{Ex:ND} the Newton diagram $\Delta_h$ is the sum 
$\Teis{1}{2}+\Teis{3}{2}+\Teis{\infty}{1}$ 
and $I(\Delta_h)=\{\,1/2,3/2,\infty\,\}$.

For every irreducible factor $h$ of $\jac(f,g)$ the Hironaka number $q(h)=\frac{i_0(g,h)}{i_0(f,h)}$, 
where $i_0(\cdot,\cdot)$ stands for the intersection multiplicity, 
is called the jacobian quotient or the jacobian invariant of $(f,g)$.  

\begin{Definition}
Let $\jac(f,g)=J_1\cdots J_n$ be an analytic factorization of the jacobian. 

We will call $J_1\cdots J_n$ a Hironaka factorization 
if for every~$J_i$ ($1\leq i\leq n)$ the Hironaka number~$q(h)$  
is constant for all irreducible factors $h$ of $J_i$. 

The Hironaka factorization $J_1\cdots J_n$ will be called minimal 
if Hironaka numbers of irreducible factors 
of $J_l$ and $J_k$ are different for $1\leq l<k\leq n$.
\end{Definition}
 
Let $\jac(f,g)=h_1\cdots h_n$ be the factorization of the jacobian into irreducible factors. 
It is easy to check (cf.~\cite{Teissier3}) that
$$\Nj(f,g)=\sum_{i=1}^n\Teisssr{i_0(g,h_i)}{i_0(f,h_i)}{8}{4} .
$$
It follows directly from the above formula that
\begin{itemize}
\item the set of jacobian quotients of $(f,g)$ is the set of inclinations of $\Nj(f,g)$,
\item if $J_1\cdots J_r$ is a Hironaka factorization of $\jac(f,g)$ then 
$$\Nj(f,g)=\sum_{i=1}^r\Teisssr{i_0(g,J_i)}{i_0(f,J_i)}{8}{4} ,
$$     
\item if $\Nj(f,g)=\sum_{i=1}^s\Teis{a_i}{b_i}$ with inclinations $\frac{a_i}{b_i}$  
         pairwise different  then $\jac(f,g)$ has the minimal Hironaka factorization $J_1\cdots J_s$ 
         such that  $i_0(g,J_i)=a_i$ and~$i_0(f,J_i)=b_i$ for $i=1,\dots,s$.
\end{itemize}
%%%%%%%%%%%%%%%%%%%%%%%%%%%%%%%%%%

Take a germ of a holomorphic mapping $(l,f):(\bC^2,0)\to(\bC^2,0)$
such that $f=0$ is a curve germ without multiple components and $l=0$ is a smooth curve. 
Under these assumptions $\jac(l,f)=0$ is called the polar curve of $f$ with respect to $l$ and 
jacobian quotients of $(l,f)$ are called polar quotients.  
A survey of recent results concerning polar curves  is in~\cite{GLP}.

In \cite{KL} the authors described the contact orders of Newton-Puiseux roots of $f_x'(x,y)=0$ 
with the Newton-Puiseux roots of $f(x,y)=0$. They constructed the tree model~$T(f)$ which encodes 
these contact orders. Using Kuo-Lu tree~$T(f)$ one can compute the set of polar quotients of $(y,f)$. 
One can also give a formula for the jacobian Newton diagram of $(y,f)$ in terms of $T(f)$
(see the last line before Example~5.2 in \cite{GG}). 

Merle in \cite{Merle} obtained the minimal Hironaka decomposition of the polar curve of 
the irreducible curve germ $f=0$ with respect to a smooth curve $l=0$ transverse to $f=0$.
Merle's results  is rewritten in \cite{Teissier3}
as a formula for the jacobian  Newton diagram of $(l,f)$ (see also \cite{GLP}, Theorem~4.1).

In \cite{Egg} the author found a Hironaka factorization of the polar curve of 
a many-branched curve $f=0$.  
He associated the factors with vertexes of a new type of tree $E(f)$ called now Eggers tree.  
Eggers found also the intersection multiplicities of every factor with $l$ and $f$.  
Since the Eggers tree $E(f)$ depends only on the equisingularity class of $(l,f)$, 
Theorem~\ref{Th:2} in this particular case follows from~\cite{Egg}. 

\medskip
%\noindent
Consider now a general case of a holomorphic mapping germ~ 
$(f,g):(\bC^2,0)\to(\bC^2,0)$, where $(f,g)^{-1}(0,0)=\{(0,0)\}$.

In \cite{Kuo-Pa} the authors additionally assumed that the curve $fg=0$ 
has no multiple components.  They introduced the Eggers tree model~$E(f,g)$ 
of a pair~$(f,g)$. They obtained the Hironaka factorization of the jacobian 
associated with vertexes of~$E(f,g)$. However they did not compute 
the intersection multiplicities of some factors (factors associated 
with collinear bars in  terminology of \cite{Kuo-Pa}) with $f$ and $g$. Hence 
Theorem~4.1 does not follow from~\cite{Kuo-Pa}.

In \cite{Ma} and \cite{Michel} the authors resolved singularities  of the curve $fg=0$.
Then they distinguished some subsets of the exceptional divisor called rupture zones 
and  associated with every rupture zone a factor of the jacobian $\jac(f,g)$. 
 Maugendre found in~\cite{Ma} using topological methods  the set of jacobian quotients 
(see also \cite{Casas1} for an algebraic proof)  and Michel completed the work 
computing intersection multiplicity of every factor with $f$ and $g$. Since the decomposition 
of the jacobian obtained by Michel is a Hironaka factorization, Theorem~\ref{Th:2} follows 
from~\cite{Michel}.  However my proof is much simpler as it uses only Theorem~\ref{Th:1} 
and Theorem~2.1 of \cite{Casas}.
%%%%%%%%%%%%%%%%%%%%%%%%%%%%%%%%%%%%%%%%%

\section{Invariance of a generic curve of the pencil}\label{Main}
\begin{Theorem} \label{Th:1}
Let $(f,g):(\bC^2,0)\to(\bC^2,0)$, $(f,g)^{-1}(0,0)=\{(0,0)\}$ 
 be a germ of a holomorphic mapping.
Then for all $t\in\bC$ but a finite number the equisingularity class of
the curve $f(x,y)-tg(x,y)=0$ depends only on the equisingularity 
class of the pair of curves $f=0$ and $g=0$.  
\end{Theorem}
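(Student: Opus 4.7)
My plan is to pass to an embedded resolution of $fg=0$ and then read off the equisingularity type of a generic member of the pencil $f-tg$ from the combinatorics on the resolved surface. Take the minimal embedded resolution $\pi\colon X\to (\bC^2,0)$ of the reduced curve $fg=0$. By Zariski's theory the weighted dual graph $\Gamma$ of the exceptional divisor, enriched with the attachment data of the strict transforms of the branches of $f$ and of $g$, is a complete combinatorial invariant of the equisingularity class of the pair $(f,g)$. In particular, for each exceptional component $E_i$ the valuations $a_i=\nu_{E_i}(f)$ and $b_i=\nu_{E_i}(g)$ are computable from $\Gamma$ and are therefore invariants of the equisingularity type.

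Next, I would analyse the pencil locally on $X$. Near a generic point of $E_i$, pick coordinates $(u,v)$ with $E_i=\{u=0\}$, so that $f\circ\pi=u^{a_i}\phi_i(u,v)$ and $g\circ\pi=u^{b_i}\psi_i(u,v)$ with $\phi_i,\psi_i$ not identically zero on $E_i$. Setting $m_i=\min(a_i,b_i)$, one has $F_t\circ\pi=u^{m_i}\Phi_t(u,v)$, where on $E_i$ the factor $\Phi_t$ restricts to $\phi_i|_{E_i}$ if $a_i<b_i$, to $-t\psi_i|_{E_i}$ if $a_i>b_i$, and to $(\phi_i-t\psi_i)|_{E_i}$ if $a_i=b_i$. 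Only the last case depends on $t$, and there the critical values of the rational function $\phi_i/\psi_i$ on $E_i\cong\mathbb{P}^1$ form a finite set; collecting these across all relevant components yields a finite exceptional set $S\subset\bC$.

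Finally, I would show that for every $t\notin S$ the strict transform $\widetilde{F_t}$ on $X$ is a disjoint union of smooth branches meeting the exceptional divisor transversely at smooth points lying away from the strict transforms of $f$ and of $g$. Each branch attaches to some component $E_i$ that is ``terminal'' among the connected strata where $a_i=b_i$; the number of branches on each such component and their attachment data are read off from $(\Gamma,(a_i,b_i))$. Contracting the components of $\pi$ not touched by $\widetilde{F_t}$ then produces an embedded resolution of $F_t=0$ whose enriched dual graph depends only on $(\Gamma,(a_i,b_i))$, hence only on the equisingularity class of $(f,g)$. The delicate part of the argument is this last step --- pinning down exactly which components carry a branch of $\widetilde{F_t}$ and with what multiplicities --- and here I would appeal directly to Theorem~2.1 of \cite{Casas} on the generic element of a pencil, which gives precisely this combinatorial recipe.
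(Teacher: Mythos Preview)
Your approach via the embedded resolution of $fg=0$ is the same as the paper's, but the third paragraph contains a genuine gap. The assertion that for generic $t$ the strict transform $\widetilde{F_t}$ consists of smooth branches meeting the exceptional configuration transversely at smooth points, away from the strict transforms of $f$ and $g$, is false in general. Partition the components $E_i$ of $\pi^{-1}(\{fg=0\})$ into $A_+=\{E_i:b_i>a_i\}$, $A_0=\{E_i:b_i=a_i\}$ and $A_-=\{E_i:b_i<a_i\}$. Your local computation is correct along the components of $A_0$, but it ignores the nodes where a component of $A_+$ meets a component of $A_-$. At such a node, in local coordinates with $E_i=\{x=0\}\in A_+$ and $E_j=\{y=0\}\in A_-$, one has $f\circ\pi=\unit\, x^{a_i}y^{a_j}$ and $g\circ\pi=\unit\, x^{b_i}y^{b_j}$, so the strict transform of $F_t$ has equation $\unit\, y^{a_j-b_j}-t\,\unit\, x^{b_i-a_i}=0$, which is singular whenever both exponents exceed~$1$. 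A minimal instance is $f=y^2$, $g=x^3$: the reduced curve $\{xy=0\}$ is already normal crossing, so $X=\bC^2$, the two axes lie in $A_-$ and $A_+$ respectively, and $F_t=y^2-tx^3$ is a cusp for every $t\neq0$. There are no $A_0$ components at all here, so your description of where the branches of $\widetilde{F_t}$ attach produces nothing.

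The paper's proof deals with exactly this difficulty: at each $A_+/A_-$ node the local equation above is Newton non-degenerate with Newton diagram $\Teis{b_i-a_i}{a_j-b_j}$, and one performs the canonical toric resolution of \cite{Oka}, which depends only on that diagram. This additional toric step is what makes the total dual resolution graph of the resulting good resolution of $F_t=0$ depend only on the data $(\Gamma,(a_i),(b_i))$. Your appeal to Theorem~2.1 of \cite{Casas} cannot replace this step: that paper concerns discriminants and inverse images of plane curves under a finite morphism, not the structure of a generic pencil member on a resolution; in the present paper the results of \cite{Casas} enter only in the proof of Theorem~\ref{Th:2} (through Lemma~\ref{L:I}), not in the proof of Theorem~\ref{Th:1}.
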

%--------------------------------------------------

I guess that the above theorem is a known result. 
However I did not find any reference and I decided to prove it. 

\begin{proof}
Our main reference is Chapter III of \cite{Oka}.
Let $R:M\to(\bC^2,0)$ be the minimal good resolution of singularities of the curve $fg=0$. 
The set $R^{-1}(\{fg=0\})$ can be written as the union of irreducible components
$E_1\cup\dots\cup E_n\cup E_{n+1}\cup\dots\cup E_m$,
where $E=E_1\cup\dots\cup E_n$ is the exceptional divisor $R^{-1}(0)$ 
and $E_{n+1}$, \dots, $E_m$  are non-compact curves 
corresponding with branches of the curve $fg=0$.
Put $\tilde f=f\circ R$,  $\tilde g=g\circ R$ and let 
$a_i=\mbox{order of $\tilde f$ along } E_i$, 
$b_i=\mbox{order of $\tilde g$ along } E_i$ for $i=1,\dots, m$.
Then, after renumbering $E_1$, \dots , $E_m$ if necessary, 
the total dual resolution graph as well as the numbers $a_i$ and $b_i$ for~$i=1,\dots,m$ 
depend only on the equisingularity class of the pair of curves $f=0$ and $g=0$.

Consider the meromorphic function $\tilde g/\tilde  f:M\setminus E\to \bC\cup\{\infty\}$. 
We will check that this function extends analytically to the whole $M$ but a finite number of points. 
Let $\unit$ denotes any germ of a holomorphic function $u(x,y)$ such that $u(0,0)\neq0$.

First take $P\in E_i$ ($1\leq i\leq n$) which is not an intersection point with another component 
$E_j$ for~$1\leq j\leq m$. 
Then there exists a local analytical coordinate system $(x,y)$ centered at $P$ such that 
$E_i$ has an equation $x=0$. In these coordinates 
$\tilde f=\unit x^{a_i}$ and $\tilde g=\unit x^{b_i}$. 
We get $\tilde g/\tilde f=\unit x^{b_i-a_i}$. 

Now take the intersection point $P$ of $E_i$  with another component $E_j$. 
Choose  a local  analytical coordinate system $(x,y)$ centered at $P$ such that 
$E_i$ has an equation $x=0$ and $E_j$ has an equation $y=0$. 
In these coordinates 
$\tilde f=\unit x^{a_i}y^{a_j}$ and~
$\tilde g=\unit x^{b_i}y^{b_j}$. 
We get $\tilde g/\tilde f=\unit x^{b_i-a_i}y^{b_j-a_j}$.

Let $H$ be an analytic extension of $\tilde g/\tilde f$. Divide the set 
$\{\,E_1,\dots,E_m\,\}$ into three subsets 
$A_{+}=\{\,E_i: b_i-a_i>0\,\}$,
$A_{0}=\{\,E_i: b_i-a_i=0\,\}$ and 
$A_{-}=\{\,E_i: b_i-a_i<0\,\}$.
It follows from above description of $\tilde g/\tilde f$ near $E$ that 
$H$ is not defined only at intersection points of components from $A_{+}$ with 
components from $A_{-}$. 

Let $E_i\in A_{0}$. Consider the restriction $H|_{E_i}$ of the meromorphic function $H$ to $E_i$.
Then $P\in E_i$ is a zero of $H|_{E_i}$ if and only if $\{P\}= E_i\cap E_j$ for some  $E_j\in A_{+}$.  
Moreover $\ord_P H|_{E_i}=b_j-a_j$. Hence the topological degree of $H|_{E_i}$ is the 
number $d_i=\sum (b_j-a_j)$ where the sum runs over all $j$ such that $E_j\in A_{+}$ and the 
intersection $E_i\cap E_j$ is nonempty.  

\medskip
\noindent
Choose any complex number $t$ which is different from  
\begin{itemize}
\item  any critical value of meromorphic functions $H|_{E_i}$ where $E_i\in A_{0}$,
\item any value $H(P)$ where $P$ is the intersection point of some~ 
$E_i\in A_{0}$ with some~$E_j$, $j\neq i$. 
\end{itemize}

Let $\Gamma$ be the proper preimage of the curve $f-tg=0$. The curve $\Gamma$ has an equation $H=t$ at every 
point where $H$ is defined. Hence $\Gamma$ intersects transversally every $E_i\in A_0$ at $d_i$~points 
and none of these points belongs to  $\bigcup_{j\neq i}E_j$.  

Now we compute the equation of $\Gamma$ at points where $H$ is not defined. 
Take $E_i\in A_{+}$,  $E_j\in A_{-}$ with nonempty intersection and denote $P_{i,j}$ their 
intersection point. There exists a local analytical coordinate system $(x,y)$ centered at $P_{i,j}$ 
such that $E_i$ has an equation $x=0$ and $E_j$ has an equation $y=0$. 
 In these coordinates $\tilde f-t\tilde g=\unit x^{a_i}y^{a_j}-t\unit x^{b_i}y^{b_j}=x^{a_i}y^{b_j}(\unit y^{a_j-b_j}-t \unit x^{b_i-a_i})$ 
hence $\Gamma$ has the equation $\unit y^{a_j-b_j}-t x^{b_i-a_i}=0$. 

We  want to resolve singularities of the curve $\Gamma$ to obtain a good (not necessarily minimal) 
resolution of singularities of $f-tg=0$. 
The functions $h_{i,j}(x,y)=\unit y^{a_j-b_j}-t x^{b_i-a_i}$
are nondegenerate with Newton diagrams $\Teisssr{b_i-a_i}{a_j-b_j}{8}{4}$. 
Hence by Theorem~4.3 of~\cite{Oka} there exists a canonical toric resolution of $h_{i,j}(x,y)=0$ at the 
origin, that is the resolution of $\Gamma$ at $P_{i,j}$,  which depends only on the Newton diagram 
of $h_{i,j}$.  Applying such a toric resolution at every point $P_{i,j}$ described above we obtain 
a good resolution of $f-tg=0$. Moreover the total dual resolution graph of this resolution depends 
only on the total dual resolution graph of $R$ and on the numbers $a_i$ and $b_i$ for $i=1,\dots,m$. 
Since the total dual resolution graph of the plane curve singularity determines its equisingularity class, 
the proof is finished.
\end{proof}
%%%%%%%%%%%%%%%%%%%%%%%%%%%%%%%%%

\section{Proof of the main result}
For all holomorphic functions $h_i$  ($i=1,2$) defined in the 
neighborhood of the origin of $\bC^2$ we will denote $i_0(h_1,h_2)$ the intersection multiplicity 
of curves $h_1=0$ and $h_2=0$ at zero and $\mu_0(h_1)$ the Milnor number of 
the curve $h_1=0$ at zero. For every Newton diagram $\Delta$ and for every 
$\vec v =(v_1,v_2)$, where $v_1>0$, $v_2>0$  we define 
$$l(\vec  v,\Delta)=\min\{\,v_1i+v_2j: (i,j)\in\Delta\,\}.$$ 

\begin{Lemma}\label{L:I}
Let $\vec v =(m,n)$, where $n$, $m$ are co-prime positive integers.
Then for generic $t\in\bC$
$$ l\bigl(\vec v,\Nj(f,g)\bigr)= \mu_0(f^n-tg^m) -i_0(f,g)[(m-1)(n-1)-1] -1.
$$
\end{Lemma}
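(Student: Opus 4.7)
My plan is to rewrite both sides of the identity as intersection numbers and then link them via Teissier's formula cited as Theorem~2.1 of~\cite{Casas}.

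Let $h_1,\dots,h_r$ be the irreducible factors of $\jac(f,g)$. The decomposition $\Nj(f,g)=\sum_{k}\Teisssr{i_0(g,h_k)}{i_0(f,h_k)}{8}{4}$ recalled in the introduction, together with the easy identity $l((m,n),\Teisssr{a}{b}{8}{4})=\min(ma,nb)$, yields
$$l(\vec v,\Nj(f,g))=\sum_{k=1}^{r}\min\bigl(m\,i_0(g,h_k),\,n\,i_0(f,h_k)\bigr).$$
Setting $F_t:=f^n-tg^m$ and parameterizing each branch $h_k$ by a Puiseux parameter, the monomials $f^n$ and $tg^m$ have orders $n\,i_0(f,h_k)$ and $m\,i_0(g,h_k)$ along $h_k$. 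For $t$ outside a finite set (the values that cause leading-term cancellation when $n\,i_0(f,h_k)=m\,i_0(g,h_k)$ for some $k$) one has $i_0(F_t,h_k)=\min(n\,i_0(f,h_k),m\,i_0(g,h_k))$, and summing over $k$ gives $i_0(F_t,\jac(f,g))=l(\vec v,\Nj(f,g))$.

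Next, a direct differentiation gives the key identity $\jac(F_t,f)=tm\,g^{m-1}\jac(f,g)$. Since $(f,g)^{-1}(0,0)=\{(0,0)\}$, the germs $F_t=0$ and $f=0$ share no common component, and for generic $t$ the function $F_t$ has an isolated singularity at the origin. Applying Theorem~2.1 of~\cite{Casas},
$$i_0(F_t,\jac(F_t,f))=\mu_0(F_t)+i_0(F_t,f)-1,$$
substituting the factorization of $\jac(F_t,f)$ above, and using the straightforward identities $i_0(F_t,f)=i_0(-tg^m,f)=m\,i_0(f,g)$ and $i_0(F_t,g)=i_0(f^n,g)=n\,i_0(f,g)$, I would obtain
$$(m-1)n\,i_0(f,g)+i_0(F_t,\jac(f,g))=\mu_0(F_t)+m\,i_0(f,g)-1.$$

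Rearranging and using the combinatorial identity $(m-1)n-m=(m-1)(n-1)-1$ will give precisely
$$l(\vec v,\Nj(f,g))=\mu_0(F_t)-i_0(f,g)[(m-1)(n-1)-1]-1.$$
The main obstacle will be controlling the finitely many exceptional values of $t$—those for which $F_t$ fails to have an isolated singularity at $0$ or for which a Puiseux leading term of $F_t$ cancels on some branch $h_k$; once these are excluded, the argument reduces to Teissier's formula together with the routine bookkeeping above.
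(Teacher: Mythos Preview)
Your proof is correct and takes a genuinely different route from the paper's.

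The paper works in the \emph{target} of $\phi=(f,g)$: it shows directly from the definition of $\Nj(f,g)$ as the Newton diagram of the discriminant $D(x,y)$ that $l(\vec v,\Nj(f,g))=i_0(x^n-ty^m,D)$ for generic $t$ (by substituting a parametrization $x=\tau s^m$, $y=s^n$ of the branch $x^n-ty^m=0$ into $D$), and then invokes Casas-Alvero's pull-back Milnor-number formula (Theorem~3.2 of~\cite{Casas}),
\[
\mu_0(\phi^*C)-1=i_0(f,g)\bigl[\mu_0(C)-1\bigr]+i_0(C,D),
\]
applied to $C\colon x^n-ty^m=0$, together with $\mu_0(x^n-ty^m)=(m-1)(n-1)$.

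You work entirely in the \emph{source}: you obtain $l(\vec v,\Nj(f,g))=i_0(F_t,\jac(f,g))$ from the branch-by-branch decomposition of $\Nj(f,g)$ recalled in the introduction, and then reach $\mu_0(F_t)$ via the chain-rule identity $\jac(F_t,f)=tm\,g^{m-1}\jac(f,g)$ combined with the Teissier-type identity $i_0(F,\jac(F,G))=\mu_0(F)+i_0(F,G)-1$. The two computations of $l(\vec v,\Nj(f,g))$ are linked by the projection formula $i_0(x^n-ty^m,\phi_*\jac(f,g))=i_0(\phi^*(x^n-ty^m),\jac(f,g))$, but the arguments themselves are independent. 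Your route avoids the discriminant and direct images and rests on the more elementary Teissier lemma plus a one-line jacobian identity; the paper's route is shorter once one is willing to quote Casas-Alvero's pull-back formula as a black box. A minor caveat: be sure the reference you cite really states the Teissier identity for arbitrary $G$ rather than only for smooth $G$; the formula does hold in that generality (and in fact follows from Casas-Alvero's formula applied to the line $u=0$ for the map $(F,G)$), but some sources only record the linear case.
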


\begin{proof}%[Proof of Lemma~\ref{L:I}]
Let $D=0$, where $D(x,y)=\sum c_{ij}x^iy^j$  be the equation of the discriminant 
curve of $\phi=(f,g):(\bC^2,0)\to(\bC^2,0)$.  Take a curve $x^n-ty^m=0$. 

\medskip
\textbf{Claim.} 
For generic $t\in\bC$ there is 
$i_0(x^n-ty^m,D)=l\bigl(\vec v,\Nj(f,g)\bigr)$.

\smallskip

Let $\tau=\sqrt[n]{t}$. Then $x=\tau s^m$, $y=s^n$ is a parametrization of  the branch $x^n-ty^m=0$.
By the classical formula for the intersection multiplicity
$$i_0(x^n-ty^m,D)=\ord_s D(\tau s^m,s^n) = 
\ord_s \sum c_{ij}\tau^i s^{mi+nj}=l\bigl(\vec v,\Nj(f,g)\bigr)$$
provided $\tau$ is sufficiently general
so that the  sum $\sum_{mi+nj=l(\vec v,\Nj(f,g))} c_{ij}\tau^i$ is nonzero. 
The Claim is proved.

\medskip
The pull-back of a curve $x^n-ty^m=0$ by $\phi$ has an equation $f^n-tg^m=0$. 
Thus by Theorem~3.2 of \cite{Casas} there is
$$ \mu_0(f^n-tg^m) -1 =    i_0(f,g)[\mu_0(x^n-ty^m)-1]+ i_0(x^n-ty^m,D)
$$
which gives the Lemma because $\mu_0(x^n-ty^m)=(m-1)(n-1)$. 
\end{proof}

\medskip
\begin{proof}[Proof of Theorem~\ref{Th:2}]
It follows from Lemma~\ref{L:I} and Theorem~\ref{Th:1} that for every vector
$\vec v=(m,n)$,  where $m$, $n$ are co-prime positive integers,
the number $l\bigl(\vec v,\Nj(f,g)\bigr)$ depends only on the 
equisingularity class of the pair $f=0$ and $g=0$. 
This proves the Theorem.
\end{proof}
%---------------------------------------------
%%%%%%%%%%%%%%%%%%%%%%%%%%%%%%%%%%%%%%%%%%%%

\end{document}